\newtheorem{thm}{Theorem}
\newtheorem{lemma}{Lemma}
\theoremstyle{definition}
\theoremstyle{remark}
\numberwithin{equation}{section}
\newcommand{\va}{\varphi}
\newcommand{\noi}{\noindent}
\newcommand{\sm}{\smallskip}
\newcommand{\me}{\medskip}
\newcommand{\bi}{\bigskip}
\newcommand{\bz}{\bar{z}}
\def\D{{\mathbb D}}
\newcommand{\vphi}{\varphi}
\begin{document}

\title{A note on convex conformal mappings}
\author{Martin Chuaqui and Brad Osgood}
\thanks{The first
author was partially supported by Fondecyt Grant  \#1150115.
\endgraf  {\sl Key words: Convex mapping, Poincar\'e metric, level set, curvature.}
\endgraf {\sl 2000 AMS Subject Classification}. Primary: 30C45;\,
Secondary: 30C80, 30C62.}
%


\begin{abstract}
We establish a new characterization for a conformal mapping of the unit disk $\D$ to be convex, and identify the mappings onto a half-plane or a parallel strip as extremals. We also show that, with these exceptions, the level sets of $\lambda$ of the Poincar\'e metric $\lambda|dw|$ of a convex domain are strictly convex.

\end{abstract}
\maketitle

The purpose of this short article is to present a new sharp characterization of conformal mappings of the unit disk $\D$ onto  convex domains with some implications for the Poincar\'e metric of the image. In particular, we will improve on an inequality obtained in \cite{cdo:convex}, where the classical characterization of convexity
\begin{equation} \label{eq:classical-convexity-condition}
\mbox{Re}\left\{1+z\frac{f''}{f'}(z)\right\} \ge 0 \tag{1}
\end{equation}
was shown to imply the stronger inequality
\begin{equation} \label{eq:stronger-convexity-condition}
\mbox{Re}\left\{1+z\frac{f''}{f'}(z)\right\}\geq \frac14(1-|z|^2)\left|\frac{f''}{f'}(z)\right|^2 \, . \tag{2}
\end{equation}
Let $Sf$ be the Schwarzian derivative of $f$. Our first result is that, in fact,

\begin{thm} The function $f$ is a convex mapping of $\D$ if and only if
\begin{equation} \label{eq:new-convexity-condition}
\mbox{Re}\left\{1+z\frac{f''}{f'}(z)\right\}\geq \frac14(1-|z|^2)\left(2|S\!f(z)|+\left|\frac{f''}{f'}(z)\right|^2\right).  \tag{3}
\end{equation}
 If equality holds at a single point in \eqref{eq:new-convexity-condition} then it holds everywhere and $f$ is a mapping either onto a half-plane or a parallel strip.
 \end{thm}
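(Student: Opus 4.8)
The plan is to prove the two implications of the equivalence and then treat the equality statement by transplanting everything to the origin. That \eqref{eq:new-convexity-condition} implies convexity is immediate, since its right-hand side is non-negative and so \eqref{eq:new-convexity-condition} forces $\mathrm{Re}\{1+zf''/f'\}\ge 0$, which is \eqref{eq:classical-convexity-condition}. For the converse, assume $f$ is convex; the point is that \eqref{eq:new-convexity-condition} can be reduced to the single point $z=0$ by pre-composition with a disk automorphism. Fix $z_0\in\D$, let $\varphi$ be the automorphism of $\D$ with $\varphi(0)=z_0$, and put $g=f\circ\varphi$. Then $g$ is convex; the Schwarzian chain rule with $S\varphi=0$ gives $Sg(0)=(1-|z_0|^2)^2\,Sf(z_0)$, and a direct differentiation gives $g''(0)/g'(0)=(1-|z_0|^2)\tfrac{f''}{f'}(z_0)-2\bar z_0$. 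Substituting these into the case $z=0$ of \eqref{eq:new-convexity-condition} for $g$, namely $1\ge\tfrac14\bigl(2|Sg(0)|+|g''(0)/g'(0)|^2\bigr)$, and expanding the square, a factor $(1-|z_0|^2)$ cancels and what remains is exactly \eqref{eq:new-convexity-condition} at $z_0$. So it suffices to prove the case $z=0$.

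For the case $z=0$, write $h(z)=1+z\tfrac{f''}{f'}(z)=1+c_1 z+c_2 z^2+\cdots$. Convexity is equivalent to $\mathrm{Re}\,h>0$ in $\D$, and since $h(0)=1$ the classical second-coefficient estimate for functions of positive real part gives $|c_2-\tfrac12 c_1^2|\le 2-\tfrac12|c_1|^2$. From $f''/f'=(h-1)/z$ and $Sf=(f''/f')'-\tfrac12(f''/f')^2$ one reads off $\tfrac{f''}{f'}(0)=c_1$ and $Sf(0)=c_2-\tfrac12 c_1^2$, so that
\[
2|Sf(0)|+\Bigl|\tfrac{f''}{f'}(0)\Bigr|^2=|2c_2-c_1^2|+|c_1|^2\le(4-|c_1|^2)+|c_1|^2=4,
\]
which is \eqref{eq:new-convexity-condition} at $z=0$. (Thus at the origin \eqref{eq:new-convexity-condition} is the sharp coefficient inequality $3|a_3-a_2^2|+|a_2|^2\le1$ for a normalized convex $f$.) This settles the equivalence.

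For the equality statement, suppose \eqref{eq:new-convexity-condition} is an equality at some point $z_0$. By the transplantation this amounts to equality in the second-coefficient estimate for $h=1+zg''/g'$, $g=f\circ\varphi$. Write $h$ through its Herglotz representation $h(z)=\int_{\partial\D}\tfrac{\zeta+z}{\zeta-z}\,d\mu(\zeta)$ with $\mu$ a probability measure on $\partial\D$, and set $\mu_k=\int\zeta^k\,d\mu$, so $c_1=2\overline{\mu_1}$, $c_2=2\overline{\mu_2}$. The identities $\mu_2-\mu_1^2=\int(\zeta-\mu_1)^2\,d\mu$ and $1-|\mu_1|^2=\int|\zeta-\mu_1|^2\,d\mu$ recast the equality as the equality case of $\bigl|\int(\zeta-\mu_1)^2\,d\mu\bigr|\le\int|\zeta-\mu_1|^2\,d\mu$; hence $(\zeta-\mu_1)^2$ has constant argument $\mu$-almost everywhere, so $\operatorname{supp}\mu$ lies on a line and consists of at most two points of $\partial\D$. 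One then reads off $h$, hence $g''/g'$, and finally $g$ and $f=g\circ\varphi^{-1}$: a single atom gives the half-plane maps, and a two-atom measure gives the maps with $g'(z)=c\,(1-az)^{-2t}(1-bz)^{-2(1-t)}$ for $|a|=|b|=1$, $0<t<1$; the asserted dichotomy then reduces to showing that, among all of these, only the half-plane maps and the parallel-strip maps carry equality to every point of $\D$. For the half-plane and the strip one then checks equality everywhere directly.

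The step I expect to be the main obstacle is this last one, the rigidity of the equality case. The transplantation and the inequality itself are routine once the second-coefficient estimate for $\mathrm{Re}\,h>0$ is spotted behind \eqref{eq:new-convexity-condition}; but squeezing out of ``equality at a single point,'' a condition that controls only the first two Taylor coefficients of $h$, the conclusion that $f$ maps $\D$ onto a half-plane or a parallel strip is where the geometry has to be brought in and where the real work lies.
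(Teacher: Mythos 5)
Your proof of the equivalence itself is correct, and it is essentially the paper's argument in different clothing: the paper applies the Schwarz--Pick inequality to $\va=h/z$ directly at the point $z$, whereas you move the point to the origin by precomposing with an automorphism and invoke the sharpened coefficient bound $|c_2-\tfrac12 c_1^2|\le 2-\tfrac12|c_1|^2$ for the Carath\'eodory class. These are the same estimate: your transplantation identity $g''(0)/g'(0)=-2p(z_0)$ shows that your inequality at the origin, $2|Sg(0)|+|g''(0)/g'(0)|^2\le 4$, is literally the paper's inequality (5) at $z_0$, which is then rearranged into (3).

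The genuine problem is in the equality statement, and it is not merely ``the main obstacle'' you flag at the end --- the step you defer cannot be carried out, because your own analysis shows the asserted dichotomy fails. Equality at $z_0$ forces the Herglotz measure of $1+zg''/g'$ to have at most two atoms, and \emph{every} such measure produces equality (at $z_0$, and then at every point, since the associated $\va$ is a unimodular constant or a disk automorphism). So the maps $g'(z)=c(1-\bar a z)^{-2t}(1-\bar b z)^{-2(1-t)}$ are extremal for all $t\in(0,1)$, not only for $t=\tfrac12$. Concretely, take $f(z)=\sqrt{(1+z)/(1-z)}$, which maps $\D$ conformally onto the sector $\{|\arg w|<\pi/4\}$ and is therefore convex. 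One computes $f''/f'=-\tfrac{1/2}{1+z}+\tfrac{3/2}{1-z}$ and $Sf=\tfrac{3}{2}(1-z^2)^{-2}$, so at $z=0$ both sides of (3) equal $1$, and equality persists everywhere (for instance at $z=\tfrac12$ one finds $(1-|z|^2)^2|Sf|+2|p|^2=\tfrac32+\tfrac12=2$, which is equality in (5)). A sector of opening $\pi/2$ is neither a half-plane nor a parallel strip. Hence the extremal class is strictly larger than claimed: it consists of the half-plane maps together with all two-atom maps, whose images are parallel strips when $t=\tfrac12$ and convex sectors otherwise. No refinement of your rigidity step can avoid this. It is worth noting that the paper's own proof stumbles at exactly the same place: from equality in Schwarz--Pick it concludes that $\va$ is an automorphism of $\D$ and then invokes Lemma 1 of \cite{cdo:convex} to get a parallel strip, but for the sector maps $\va(z)=(z+a)/(1+az)$ with $a=2t-1\ne 0$ is an automorphism that does not fix the origin, and $f$ is not a strip map. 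Your framework, pushed one step further, actually exposes this; the honest conclusion of your Herglotz analysis is a corrected classification of the equality case, not the one stated in the theorem.
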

\noindent   Note that \eqref{eq:new-convexity-condition}  reduces to \eqref{eq:stronger-convexity-condition} when $f$ is a M\"obius transformation.

A second result is an application of  \eqref{eq:new-convexity-condition} to a property of the Poincar\'e metric for convex regions. Recall that the Poincar\'e metric on $f(\D)$ is defined by $\lambda(w)|dw| = |dz|/(1-|z|^2)$, $w=f(z)$.
We will show that, except for a  half-plane or a parallel strip, the level sets of  $\lambda$  have  strictly positive curvature, or equivalently that the sets in $\D$ defined by
$
(1-|z|^2)|f'(z)|=\mbox{constant}
$
have this property relative to the conformal metric $|f'||dz|$. The presence of the Schwarzian term in (3) is crucial for establishing this.

\begin{thm} The level sets of $\lambda(w)$ in a convex domain have nonnegative curvature. If the curvature of any level set is zero at a single point then the domain is a parallel strip or a half-plane and all level sets have zero curvature.
\end{thm}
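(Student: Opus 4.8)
The plan is to express the curvature of a level set of $\lambda$ in terms of $f$ and reduce it to inequality (3). Since $f$ is an isometry from $(\D,|f'|^2|dz|^2)$ onto $f(\D)$ with its Euclidean metric, the Euclidean curvature of $\{\lambda=c\}$ at $w_0=f(z_0)$ equals the geodesic curvature, in the conformal metric $e^{2\sigma}|dz|^2$ with $\sigma=\log|f'|$, of the level set through $z_0$ of
\[
L(z)=\log\lambda(f(z))=-\log(1-|z|^2)-\log|f'(z)|.
\]
Writing $\kappa_g=\Hess_g L(T,T)/|\nabla_g L|_g$ for a $g$-unit tangent $T$ to this curve, and using the conformal transformation law for the Hessian — whose cross terms drop because $dL(T)=0$ — one finds that $\kappa_g$ is a positive multiple of
\[
Q(z):=\Hess_0 L(\hat T,\hat T)+\langle\nabla_0\sigma,\nabla_0 L\rangle_0,
\]
where $\hat T$ is a Euclidean unit tangent to $\{L=c\}$ and the subscript $0$ denotes the Euclidean gradient, Hessian and inner product. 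Thus Theorem B reduces to showing $Q\geq 0$ on $\D$ (understood at regular points of $\lambda$, where $\nabla_0 L\neq 0$), with the rigidity read off from the equality case.

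Next I reduce to the single point $z_0=0$. If $\phi$ is a disk automorphism with $\phi(0)=z_0$, then $\widetilde f=f\circ\phi$ is again a convex mapping with $\widetilde f(\D)=f(\D)$, and the identity $(1-|z|^2)|\phi'(z)|=1-|\phi(z)|^2$ gives $\log\lambda\circ\widetilde f=L\circ\phi$; since the level set of $\lambda$ through $w_0$ and its curvature there are intrinsic to $f(\D)$, it suffices to prove $Q(0)\geq 0$ for every convex $f$. At the origin $\nabla_0\log(1-|z|^2)=0$ and $\Hess_0\log(1-|z|^2)=-2I$, while $\sigma=\log|f'|$ is harmonic (being the real part of $\log f'$), and in the identification $\R^2\simeq\C$ one has $\nabla_0\sigma(0)=\overline{(f''/f')(0)}$ and $\Hess_0\sigma(0)$ encoded by $(\log f')''(0)=Sf(0)+\tfrac12\big((f''/f')(0)\big)^2$. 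Since $\hat T\perp\nabla_0 L(0)=-\nabla_0\sigma(0)$, substitution gives, with $c=(f''/f')(0)$ and $s=Sf(0)$,
\[
Q(0)=2+\frac{\RR\!\big(s\,\overline{c}^{\,2}\big)}{|c|^2}-\frac12|c|^2\ \geq\ 2-|s|-\frac12|c|^2\ =\ 2-\tfrac12\big(2|s|+|c|^2\big),
\]
where the inequality is $\RR(s\,\overline{c}^{\,2})\geq-|s|\,|c|^2$ (the formula degenerates harmlessly when $c=0$).

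Finally, evaluating (3) at $z=0$ gives $1\geq\tfrac14\big(2|Sf(0)|+|(f''/f')(0)|^2\big)$, i.e.\ $2|s|+|c|^2\leq 4$, so $Q(0)\geq 0$ and all level sets have nonnegative curvature. If the curvature of some level set vanishes at a point, then — after normalizing that point to $0$ — the chain above forces equality in (3) at $z=0$, whence Theorem 1 shows that $f$ maps $\D$ onto a half-plane or a parallel strip; for these two domains $\lambda$ is a function of a single linear coordinate, so every level set is a straight line and all curvatures vanish, while for every other convex domain the curvature is then strictly positive everywhere. I expect the principal effort to lie in the bookkeeping of the first two paragraphs — the conformal change of the Hessian, the complex-variable identifications of the gradients, the signs — and in checking that the reduction to $z_0=0$ and the restriction to regular level sets are handled cleanly; once one is at the origin, the estimate collapses directly onto (3).
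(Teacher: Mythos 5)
Your proposal is correct, and it reaches the same crux as the paper --- namely that the curvature of a level set of $\lambda$ reduces, after the conformal bookkeeping, to the defect in inequality (3) of Theorem 1 --- but by a genuinely different route. The paper proves a general formula (its Lemma, equation (6)) for the Euclidean curvature $k$ of the level curve $(1-|z|^2)|f'(z)|=c$ at an \emph{arbitrary} point of $\D$, by differentiating the defining equation twice along an arclength parametrization, and then converts to the curvature in the metric $|f'||dz|$ via $e^{\sigma}\kappa=k-\partial_n\sigma$; the Schwarzian enters through $\mathrm{Re}\{(z')^2Sf\}\ge -|Sf|$ exactly as your $\RR(s\,\overline{c}^{\,2})\ge -|s||c|^2$ does. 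You instead exploit the M\"obius invariance of the Poincar\'e metric to normalize the point of interest to the origin, where $\nabla_0\log(1-|z|^2)=0$ and $\Hess_0\log(1-|z|^2)=2I$ (for $-\log(1-|z|^2)$), so that the general conformal transformation law for the Hessian collapses to the two-line identity $Q(0)=2+\RR(s\overline{c}^{\,2})/|c|^2-\tfrac12|c|^2$; I checked this against the transformation law and it is right, including the signs and the term $\langle\nabla_0\sigma,\nabla_0L\rangle_0=-|c|^2$. What the normalization buys you is the elimination of the paper's Lemma and its somewhat delicate double differentiation; what it costs is the explicit formula (6), which holds for any locally injective $f$ and is of independent interest. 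Your handling of the equality case is also sound: $Q(0)=0$ forces equality in (3) at a point, hence Theorem 1 applies, and for the half-plane and the strip the level sets are visibly straight lines. Two small points to make explicit in a final write-up: the curvature of a level set is only defined at regular points of $\lambda$ (your $c\ne 0$ after normalization, i.e.\ $p\ne 0$ in the paper's notation --- the paper disposes of critical points by citing the fact that $\lambda$ has at most one, at its minimum, except along the central line of a strip), and the sign convention in $\kappa_g=\Hess_gL(T,T)/|\nabla_gL|_g$ should be fixed so that positive curvature corresponds to convexity of the sublevel sets $\{\lambda\le c\}$, matching the paper's orientation of the normal $p$.
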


This refines the results in \cite{cf:convex} and  \cite{mk:convex}, where it is shown that on convex regions the function $1/\lambda$ is concave, or equivalently that $\log\lambda$ is convex. It follows from these earlier results that the sets $\lambda\leq c$ are convex, but it does not rule out flat parts of the curve $\lambda=c$ or isolated  points where the curvature vanishes. 

\me

\begin{proof}[Proof of Theorem 1] The sufficiency follows at once as \eqref{eq:new-convexity-condition} is stronger than \eqref{eq:classical-convexity-condition}. Suppose next that $f$ is convex.
Via \eqref{eq:classical-convexity-condition}  we know that
\[
1+z \frac{f''}{f'}(z)=\frac{1+h(z)}{1-h(z)}
\]
for some holomorphic $h\colon\D\rightarrow \D$ with $h(0)=0$. We appeal to Schwarz's lemma. The function $\vphi(z)=h(z)/z$ is holomorphic, maps $\D$ into $\overline{\D}$, and
 \[
 \va(z) = \frac{f''(z)/f'(z)}{2+zf''(z)/f'(z)}\,. 
\]
One possibility is $|\vphi| \equiv 1$. In this case $f$ is a half-plane mapping, $Sf=0$, and \eqref{eq:new-convexity-condition}, really \eqref{eq:stronger-convexity-condition}, holds with equality for all $z$.

If $|\vphi|\not\equiv 1$ then
\[
 \frac{|\va'(z)|}{1-|\va(z)|^2}\leq \frac{1}{1-|z|^2}. \tag{4}
 \]
This implies
\[ \label{eq:kim-minda}
(1-|z|^2)^2|Sf(z)| +2\left| p(z) \right|^2 \le 2 \tag{5}
\]
after a short calculation, where we have written
\[
p(z) = \bar{z}-\frac{1}{2}(1-|z|^2)\frac{f''}{f'}(z).
\]
In turn, on expanding $|p(z)|^2$, (5) can be rearranged to yield (3) (and vice versa). The two inequalities are equivalent, but the important point for our work is that in (3) the factor $1-|z|^2$ occurs to the first power, not to the second.

Suppose now that equality holds in (3) at one point, and suppose also that $|\vphi|<1$ (the case $|\vphi|\equiv 1$ having been analyzed). Equality in (3) at a point implies equality in (5) at a point, and then also equality  in (4) at a point. Thus $\vphi$ is a M\"obius transformation of $\D$ to itself and equality holds everywhere in (3), (4) and (5). Furthermore, it follows from Lemma 1 in \cite{cdo:convex} that $f$ maps $\D$ onto a parallel strip.

\end{proof}
The proof shows that (5) is also a necessary and sufficient condition for a mapping to be convex. This was originally established in \cite{mk:convex} and also proved, essentially as above, in \cite{cdo:convex}. Actually, the loop of implications is (1) $\implies$ (4) (or $|\vphi|\equiv 1$) $\implies$  (5) $\implies$ (3) $\implies$ (1), and also (1) $\iff$ (2), so all are equivalent  to $f$ being a convex mapping.

\me

We now turn to the convexity property of the Poincar\'e metric.
The level set
$\lambda(w)=1/c $
corresponds under $f$ to the curve in $\D$ where
\[
{(1-|z|^2)|f'(z)|}=c \, .
\]
This will be a smooth curve provided $\nabla ((1-|z|^2)|f'(z)|) \neq 0$ there, and this is equivalent to the condition
$p \ne 0$. Thought of as a vector, the complex number $p$ is normal to the curve.

For the proof of Theorem 2 we need a formula for curvature that in itself is not particular to convexity. 

\begin{lemma} Let $f$ be locally injective in $\D$, and let $\gamma\subset\D$ be the level set
\[
(1-|z|^2)|f'(z)|=c \, ,
\]
for a constant $c$. Suppose that $p\neq 0$ on $\gamma$. Then
\begin{equation*}
|p(z)|k(z)=1+\frac14(1-|z|^2)|\frac{f''}{f'}(z)|^2+\frac{1-|z|^2}{2|p(z)|^2}{\rm Re}\{\overline{p(z)}^2 S\!f(z)\} \, ,\tag{6}
\end{equation*}
where $k$ is the euclidean curvature of $\gamma$.
\end{lemma}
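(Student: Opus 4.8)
The plan is to compute the Euclidean curvature of the level set $\gamma = \{(1-|z|^2)|f'(z)| = c\}$ directly from the defining function, using the standard formula for the curvature of a level curve $\{u = c\}$ in the plane. The cleanest route is to work with $u = \log((1-|z|^2)|f'(z)|) = \log(1-|z|^2) + \log|f'(z)|$ and note that $\gamma = \{u = \log c\}$. For a level curve of a function $u$, the Euclidean curvature is
\[
k = \frac{-\big(u_{xx}u_y^2 - 2u_{xy}u_xu_y + u_{yy}u_x^2\big)}{|\nabla u|^3}.
\]
I would rewrite this in complex notation: with $\partial = \tfrac12(\partial_x - i\partial_y)$, one has $|\nabla u|^2 = 4|u_z|^2$, and the numerator has a compact complex form in terms of $u_z$, $u_{z\bar z}$, and $u_{zz}$. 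The first job is therefore to compute these derivatives for $u = \log(1-z\bar z) + \log|f'|$.

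The key computations are: $\partial_z \log(1-|z|^2) = -\bar z/(1-|z|^2)$, and $\partial_z \log|f'| = \tfrac12 f''/f'$ (since $\log|f'| = \tfrac12(\log f' + \overline{\log f'})$ and $f'$ is holomorphic). Hence
\[
u_z = \frac12\frac{f''}{f'} - \frac{\bar z}{1-|z|^2} = -\frac{1}{1-|z|^2}\Big(\bar z - \frac12(1-|z|^2)\frac{f''}{f'}\Big) = -\frac{p(z)}{1-|z|^2},
\]
which already explains why $p$ is the (complex) normal to $\gamma$ and why $p \neq 0$ is exactly the smoothness condition $\nabla u \neq 0$. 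Then $|\nabla u| = 2|p|/(1-|z|^2)$. Next I would compute $u_{z\bar z}$ (the Laplacian over $4$): the $\log|f'|$ term contributes $0$ since it is harmonic, and $\partial_{\bar z}(-\bar z/(1-|z|^2)) = -1/(1-|z|^2)^2$, so $u_{z\bar z} = -1/(1-|z|^2)^2$. Finally $u_{zz}$: differentiating $u_z = \tfrac12 f''/f' - \bar z/(1-|z|^2)$ in $z$ gives $\tfrac12(f''/f')' - \bar z^2/(1-|z|^2)^2$. Here I recall $S\!f = (f''/f')' - \tfrac12(f''/f')^2$, so $(f''/f')' = S\!f + \tfrac12(f''/f')^2$, and thus $u_{zz} = \tfrac12 S\!f + \tfrac14(f''/f')^2 - \bar z^2/(1-|z|^2)^2$.

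With these in hand, I would assemble the complex form of the curvature numerator — after converting the real expression $u_{xx}u_y^2 - 2u_{xy}u_xu_y + u_{yy}u_x^2$ into complex notation it becomes (up to a constant factor) $|\nabla u|^2$ times something involving $u_{z\bar z}$ together with $\operatorname{Re}\{\overline{u_z}^2 u_{zz}\}/|u_z|^2$; concretely, $k|\nabla u| = -2u_{z\bar z} + 2\operatorname{Re}\{\bar u_z^2 u_{zz}\}/|u_z|^2$ after dividing through. Substituting $u_z = -p/(1-|z|^2)$, $u_{z\bar z} = -1/(1-|z|^2)^2$, and the expression for $u_{zz}$, the term $-\bar z^2/(1-|z|^2)^2$ inside $u_{zz}$ will combine with the $\operatorname{Re}\{\bar p^2 \cdot \bar z^2\}$ pieces; expanding $\bar p = z - \tfrac12(1-|z|^2)\overline{f''/f'}$ and tracking the cancellations should collapse everything to
\[
|p|k = 1 + \frac14(1-|z|^2)\Big|\frac{f''}{f'}\Big|^2 + \frac{1-|z|^2}{2|p|^2}\operatorname{Re}\{\overline{p}^2 S\!f\},
\]
matching (6). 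I expect the main obstacle to be purely bookkeeping: getting the sign and normalization right in passing from the real Hessian formula for level-curve curvature to its complex form, and then patiently verifying that the $\bar z$-dependent terms coming from $u_{zz}$ and from $\bar p^2$ assemble into the clean $1 + \tfrac14(1-|z|^2)|f''/f'|^2$ plus the lone Schwarzian remainder, with no leftover $\bar z$ terms. A useful sanity check along the way is the flat case $f = \mathrm{id}$: then $f''/f' = 0$, $S\!f = 0$, $p = \bar z$, and (6) should reduce to $|z|k = 1$, i.e. the level sets $1-|z|^2 = c$ are circles of radius $|z|$ — correct.
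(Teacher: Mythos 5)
Your proof is correct, and it takes a genuinely different route from the paper. The paper parametrizes $\gamma$ by Euclidean arclength, differentiates the defining relation $(1-|z(s)|^2)|f'(z(s))|=c$ twice, and reads off $k$ from $z''=-k\hat q$; the Schwarzian enters only after a somewhat ad hoc regrouping of $\operatorname{Re}\{(z')^2(f''/f')'\}$ with half of $\operatorname{Re}\{z'f''/f'\}^2$. You instead apply the level-curve curvature formula $k=\pm\operatorname{div}(\nabla u/|\nabla u|)$ to $u=\log\bigl((1-|z|^2)|f'|\bigr)$ in Wirtinger coordinates, i.e. $k\,|\nabla u|=-2u_{z\bar z}+2\operatorname{Re}\{\overline{u_z}^2u_{zz}\}/|u_z|^2$ with your orientation. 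This buys several things: the identity $u_z=-p/(1-|z|^2)$ makes it immediate that $p$ is the normal and that $p\neq0$ is exactly the smoothness condition; harmonicity of $\log|f'|$ kills its contribution to $u_{z\bar z}$; and the Schwarzian appears for a structural reason, namely $u_{zz}=\tfrac12 Sf+\tfrac14(f''/f')^2-\bar z^2/(1-|z|^2)^2$. I checked the remaining algebra: with $t=1-|z|^2$ and $a=f''/f'$, one has $|p|^2=|z|^2-t\operatorname{Re}(za)+\tfrac{t^2}{4}|a|^2$, and the non-Schwarzian part of $\operatorname{Re}\{\bar p^2 u_{zz}\}$ does collapse (the $\operatorname{Re}\{a^2z^2\}$ terms cancel exactly) to give $|p|k=1+\tfrac14 t|a|^2+\tfrac{t}{2|p|^2}\operatorname{Re}\{\bar p^2 Sf\}$, which is (6). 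Your sign choice is fixed correctly by the $f=\mathrm{id}$ check. What the paper's route buys in exchange is that the arclength setup ($z'=-i\hat q$, $z''=-k\hat q$) and the form $\operatorname{Re}\{(z')^2Sf\}$ are reused verbatim in the proof of Theorem 2, so if you adopt your derivation you should note that $(z')^2=-\bar p^2/|p|^2$ to make the two statements of (6) match.
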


\begin{proof}

Because $p\neq 0$ on $\gamma$, we may choose a Euclidean arclength parametrization $z=z(s)$, oriented so that the normal direction $p$ points to the right of $z'(s)$. Let $q = \overline{p}$ and $\hat{q} = q/|q|$. With the given orientation of $\gamma$ we have that
\begin{equation*} \label{eq:Psi-relationships}
z'=-i\hat{q} \quad  \text{and} \quad
z''=-k\hat{q} \, , \tag{7}
\end{equation*}
with $k\geq 0$ if and only if the level set is convex.

Differentiating $(1-|z(s)|^2)|f'(z(s))|=c$ once we obtain
\[
 \mbox{Re} \left\{z'\,\frac{f''}{f'}(z)\right\}=2\frac{\mbox{Re}\{\bz z'\}}{1-|z|^2}\, ,
 \]
while a second differentiation yields
\[
\begin{aligned}
\mbox{Re}\left\{(z')^2\left(\frac{f''}{f'}\right)'(z)\right\}+\mbox{Re}\left\{z''\frac{f''}{f'}(z)\right\}&=2\frac{1+\mbox{Re}\left\{\bz z''\right\}}{1-|z|^2}+
4\left(\frac{\mbox{Re}\{\bz z'\}}{1-|z|^2}\right)^2\\
&= 2\frac{1+\mbox{Re}\{\bz z''\}}{1-|z|^2}+\mbox{Re}\left\{z'\,\frac{f''}{f'}(z)\right\}^2\, .
\end{aligned}
\]
Rewrite the last term on the right hand side as
\[
\mbox{Re}\left\{\left(z'\,\frac{f''}{f'}(z)\right)^2\right\}+\mbox{Im}\left\{z'\,\frac{f''}{f'}(z)\right\}^2\,
\]
to get
\[
\mbox{Re}\left\{(z')^2S\!f(z)\right\}+\mbox{Re}\left\{z''\,\frac{f''}{f'}(z)\right\}=2\frac{1+\mbox{Re}\{\bz z''\}}{1-|z|^2}+\frac12\mbox{Re}\left\{\left(z'\,\frac{f''}{f'}(z)\right)^2\right\}
+\mbox{Im}\left\{z'\,\frac{f''}{f'}(z)\right\}^2 \, .
\]
Since
\[
\frac12\mbox{Re}\left\{\left(z'\,\frac{f''}{f'}(z)\right)^2\right\}=\frac12\mbox{Re}\left\{z'\,\frac{f''}{f'}(z)\right\}^2-\frac12 \mbox{Im}\left\{z'\,\frac{f''}{f'}(z)\right\}^2\,,
\]
we obtain
\[
\mbox{Re}\left\{(z')^2S\!f(z)\right\}+\mbox{Re}\left\{z''\,\frac{f''}{f'}(z)\right\}=2\frac{1+\mbox{Re}\{\bz z''\}}{1-|z|^2}+\frac12\left|\frac{f''}{f'}(z)\right|^2 \, ,\]
which we further rewrite as
\[
-2\mbox{Re}\left\{z''p(z)\right\}=\frac{2}{1-|z|^2}+\frac12\left|\frac{f''}{f'}(z)\right|^2-\mbox{Re}\left\{(z')^2S\!f(z)\right\}\, .
\]
Using (7), this is the equation in the lemma.
\end{proof}

The issue in establishing strict convexity is the presence of critical points for $\lambda$. These correspond to points in $\D$ where $p(z)=0$. Now, convex mappings satisfy
\[
\sup_{|z|<1}(1-|z|^2)^2|S\!f(z)|\leq 2\,, \tag{8}
\]
see \cite{nehari:convex} and \cite{cdo:convex}.  The results in \cite{co:aw} thus apply, namely that $\lambda$ has at most one critical point, with the exception of a parallel strip where $\nabla \lambda =0$ all along the central line.  For unbounded convex domains, more generally for unbounded domains coming from (8), there are \emph{no} critical points, except again for a parallel strip.  When it exists,  the unique critical point corresponds to the absolute minimum of $\lambda$.

\begin{proof}[Proof of Theorem 2]
We analyze level sets away from the unique critical point of $\lambda$, if there is one.
Let $\kappa$ be the curvature of $\gamma$ relative to the metric $|f'||dz|$ (which is the Euclidean curvature of $f(\gamma)$) and let $\sigma=\log|f'|$. Then
\[
 e^{\sigma}\kappa=k-\frac{\partial}{\partial n}\sigma \, ,
 \]
  where $\partial/\partial n$ is the derivative of $\sigma$ in the direction of the
normal  to $\gamma$, i.e., in the direction $-\hat{q}$. Hence
\[
\begin{aligned}
e^{\sigma(z)}\kappa(z)&=k(z)+2\mbox{Re}\{\hat{q}(z)\partial_z\sigma(z)\}\\
&=k(z)+\frac{\mbox{Re}\{z\,\frac{f''}{f'}(z)\}}{|p(z)|}-\frac{1}{2|p(z)|}(1-|z|^2)\left|\frac{f''}{f'}(z)\right|^2\\
&=\frac{1}{|p(z)|}\left(k|p(z)|+\mbox{Re}\left\{z\,\frac{f''}{f'}(z)\right\}-\frac12(1-|z|^2)\left|\frac{f''}{f'}(z)\right|^2\right) \, .
\end{aligned}
\]
We replace the expression for $|p|k$ from the lemma, and obtain
$$
e^{\sigma(z)}\kappa(z)=\frac{1}{|p(z)|}\left(1-\frac14(1-|z|^2)\left|\frac{f''}{f'}(z)\right|^2+\mbox{Re}\left\{z\frac{f''}{f'}(z)\right\}-\frac12(1-|z|^2)\mbox{Re}\{(z')^2Sf(z)\}\right)$$
$$
=\frac{1}{|p(z)|}\left(\mbox{Re}\left\{1+z\frac{f''}{f'}(z)\right\}-\frac14(1-|z|^2)\left|\frac{f''}{f'}(z)\right|^2-\frac12(1-|z|^2)\mbox{Re}\{(z')^2Sf(z)\}\right)$$
$$\geq \frac{1}{|p(z)|}\left(\mbox{Re}\left\{1+z\frac{f''}{f'}(z)\right\}-\frac14(1-|z|^2)\left|\frac{f''}{f'}(z)\right|^2-\frac12(1-|z|^2)|Sf(z)|\right) \geq 0 \, ,$$
the final inequality holding precisely because of Theorem 1.

We claim that if $\kappa=0$ at one point, then $f$ maps $\D$ onto a half-plane or onto a parallel strip. Indeed, if the curvature vanishes at some point, then all inequalities used to derive that $\kappa\geq 0$ must be equalities. Referring to the proof of Theorem 1, this implies that the function $\va$ must be  a constant of absolute value $1$ or an automorphism of the disk. In the first case, $f(\D)$ is a half-plane, where the level sets of $\lambda$ are all straight lines parallel to the boundary.
In the second case $f(\D)$ is a parallel strip. The Poincar\'e metric on the model strip $|\mbox{Im}\, y|<\pi/2$, is  $\lambda|dw|=\sec y|dw|$, $w=x+iy$. The axis of symmetry $y=0$ is where  $\lambda$  has its absolute minimum, and $\nabla\lambda=0$ there. Any other level set will consist of a pair of horizontal lines $y=\pm a$, for some $a\in(0,\pi/2)$.   In summary, if the curvature is zero at one point of a level set then it is zero at all points of all level sets.

\end{proof}

We are happy to thank Peter Duren for his interest in this work.

\me
\bibliographystyle{plain}

\bi
\noi
{\small Facultad de Matem\'aticas, Pontificia Universidad Cat\'olica de Chile,
 \email{mchuaqui@mat.uc.cl}}

\sm
\noi
{\small Department of Electrical Engineering, Stanford University,
 \email{osgood@stanford.edu}}

\end{document}